\newcommand{\la}{\lambda}
\newcommand{\be}{\beta}
\newcommand{\e}{\varepsilon}
\newcommand{\BN}{\mathbb{N}}
\newcommand{\at}{\tilde{a}}
\newcommand{\Erdos}{Erd{\H o}s}
\newcommand{\E}{\mathcal E}
\newtheorem{lemma}{Lemma}
\newtheorem{prop}[lemma]{Proposition}
\newtheorem{thm}[lemma]{Theorem}
\theoremstyle{definition}
\newtheorem{example}[lemma]{Example}
\theoremstyle{remark}
\newtheorem{rmk}[lemma]{Remark}
\numberwithin{equation}{section} \numberwithin{table}{section}
\title[A lower bound for Garsia's entropy]
{A lower bound for Garsia's entropy \\ for certain Bernoulli convolutions}
\author{Kevin G. Hare}
\address{Department of Pure Mathematics, University of Waterloo, Waterloo, Ontario, Canada N2L 3G1. E-mail: kghare@math.uwaterloo.ca}
\thanks{Research of K. G. Hare supported, in part by NSERC of Canada.}
\thanks{Computational support provided in part by the Canadian Foundation for Innovation,
    and the Ontario Research Fund.}
\author{Nikita Sidorov}
\address{School of Mathematics, The University of Manchester, Oxford Road, Manchester M13 9PL, United Kingdom. E-mail:
sidorov@manchester.ac.uk}
\date{\today}
\subjclass[2000]{26A30; 28D20; 11R06}
\keywords{Pisot number, Bernoulli convolution, Garsia's entropy.}
\begin{document}

\begin{abstract}
Let $\be\in(1,2)$ be a Pisot number and let $H_\be$ denote Garsia's entropy for the Bernoulli convolution associated with $\be$. Garsia, in 1963 showed that $H_\be<1$ for any Pisot $\be$. \nocite{Garsia63} For the Pisot numbers which satisfy $x^m=x^{m-1}+x^{m-2}+\dots+x+1$ (with $m\ge2$) Garsia's entropy has been evaluated with high precision by Alexander and Zagier for $m=2$ and later by Grabner Kirschenhofer and Tichy for $m\ge3$, and it proves to be close to 1. \nocite{AlexanderZagier91, GrabnerKirschenhoferTichy02} No other numerical values for $H_\be$ are known.

In the present paper we show that $H_\be>0.81$ for all Pisot $\be$, and improve this lower bound for certain ranges of $\beta$.  Our method is computational in nature.
\end{abstract}

\maketitle

\section{Introduction and summary}
\label{sec:intro}

Representations of real numbers in non-integer bases were introduced
by R\'enyi \cite{Re} and first studied by R\'enyi and by Parry \cite{Pa, Re}.
Let $\beta$ be a real number $>1$. A {\em $\beta$-expansion}
of the real number $x \in [0,1]$ is an infinite sequence of integers
$(a_1, a_2, a_3, \dots)$ such that $x = \sum_{n \ge 1}a_n \beta^{-n}$.
The reader is referred to Lothaire, \cite[Chapter 7]{Lo} for more on these topics.
For the purposes of this paper, we assume that $1 < \beta < 2$ and $a_i \in \{0, 1\}$.

Let $\mu_\be$ denote the {\em Bernoulli convolution} parameterized by $\be$ on $I_\be:=[0,1/(\be-1)]$, i.e.,
\[
\mu_\be(E)=\mathbb P\left\{(a_1,a_2,\dots)\in\{0,1\}^\BN : \sum_{k=1}^\infty a_k\be^{-k}\in E\right\}
\]
for any Borel set $E\subseteq I_\be$, where $\mathbb P$ is the product measure on $\{0,1\}^\BN$ with $\mathbb P(a_1=0)=\mathbb P(a_1=1)=1/2$. Since $\be<2$, it is obvious that $\text{supp}\,(\mu_\be)=I_\be$.

Bernoulli convolutions have been studied for decades (see, e.g., Peres, Schlag and Solomyak \cite{PeresSchlagSolomyak00} and Solomyak \cite{Solomyak04}), but there are still many open problems in this area. The most significant property of $\mu_\be$ is the fact that it is either absolutely continuous or purely singular (see Jessen and Wintner \cite{JessenWintner35}); \Erdos\ showed that if $\be$ is a Pisot number, then it is singular (see \cite{Erdos39}). No other $\be$ with this property have been found so far.

Recall that a number $\be>1$ is called a {\em Pisot number} if it is an algebraic integer whose Galois conjugates $h \neq \beta$ are less than 1 in modulus. Such is the golden ratio $\tau=\frac{1+\sqrt5}2$ and, more generally, the {\em multinacci numbers} $\tau_m$, the positive real root satisfying $x^m=x^{m-1}+x^{m-2}+\dots+x+1$ with $m\ge2$.
The set of Pisot numbers is typically denoted by $S$.
It has been proved by Salem that $S$ is a closed subset of $(1,\infty)$ (see \cite{Salem}). Moreover, Siegel has proved that the smallest Pisot number is the real cubic unit satisfying $x^3=x+1$ -- see \cite{Siegel}.
Amara, \cite{Amara66}, gave a complete description of the set of all limit points of the Pisot numbers in $(1,2)$.
In particular:
\begin{thm}[Amara]\label{thm:Amara}
The limit points of $S$ in $(1,2)$ are the following:
$$\varphi_1=\psi_1<\varphi_2<\psi_2<\varphi_3<\chi
<\psi_3<\varphi_4< \dots <\psi_r<\varphi_{r+1}< \dots <2$$
where
$$
\begin{cases}
\text{the minimal polynomial of\ } \varphi_r \text{\ is\ }
    \Phi_r(x) = x^{r+1}-2x^r+x-1, \\
\text{the minimal polynomial of\ } \psi_r \text{\ is\ }
    \Psi_r(x) = x^{r+1}-x^r-\dots-x-1, \\
\text{the minimal polynomial of\ } \chi \text{\ is\ }
    \mathcal{X}(x) = x^4-x^3-2x^2+1. \\
\end{cases}
$$
\end{thm}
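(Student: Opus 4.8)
The plan is to use the correspondence of Dufresnoy and Pisot between Pisot numbers in $(1,2)$ and a compact family of rational functions. To a Pisot number $\be$ with minimal polynomial $P$ of degree $d$ I attach $f_\be(z)=P(z)/P^*(z)$, where $P^*(z)=z^dP(1/z)$ is the reciprocal polynomial. Because the Galois conjugates of $\be$ lie strictly inside the unit disc while $\be$ lies outside, $f_\be$ is unimodular on the unit circle and has exactly one pole in the open disc, at $z=1/\be$; its numerator and denominator lie in $\BZ[z]$ with $P^*(0)=1$. Clearing this single pole by one fractional-linear step yields the associated Schur function, bounded by $1$ in the disc, so by Montel's theorem the family of all such functions is normal and hence relatively compact for local uniform convergence. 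Under this correspondence $\be$ is a limit point of $S$ precisely when $f_\be$ is a limit of a sequence of \emph{distinct} Pisot functions; and since $S$ is closed by Salem's theorem \cite{Salem}, every such limit point is again a Pisot number, so the task reduces to deciding which Pisot functions arise as nontrivial limits.

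The heart of the argument is a Schur-type algorithm that expands a member $f$ of the family (equivalently, that successively removes the pole by a sequence of fractional-linear steps). Integrality of the coefficients together with the unimodular bound forces only finitely many admissible choices at each stage, and the bookkeeping organizes the admissible infinite paths into finitely many branches. For each branch I would read off the limiting rational function, and hence the polynomial appearing in its denominator; this is where $\Phi_r$, $\Psi_r$ and $\mathcal X$ should emerge. Simultaneously, truncating a branch at depth $n$ and closing it off produces a genuine Pisot function whose pole lies near $1/\be$, thereby furnishing an explicit sequence of Pisot numbers converging to the corresponding limit point. Running the algorithm therefore settles both inclusions at once: the listed numbers are limit points, and no others are.

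It remains to pin down the combinatorics of the statement, which is a finite computation. One checks that $\Phi_1=\Psi_1=x^2-x-1$, so that $\varphi_1=\psi_1=\tau$; that each of $\Phi_r$, $\Psi_r$ and $\mathcal X$ has a unique root in $(1,2)$ whose remaining conjugates are strictly inside the unit disc, confirming that the limit points are themselves Pisot; and that these roots interlace in the displayed order and increase to $2$. The main obstacle is the completeness of the enumeration --- proving that every admissible branch of the algorithm has been accounted for and that the branches can accumulate only at $2$ --- since this demands delicate, essentially case-by-case control of the Schur expansion near the boundary of the disc, and it is precisely here that the full strength of the integrality constraint must be exploited.
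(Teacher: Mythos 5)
This statement is quoted in the paper from Amara \cite{Amara66}; the paper gives no proof of its own, so the only question is whether your argument would stand on its own as a proof. It would not: what you have written is a program, not a proof. The entire mathematical content of the theorem is the explicit enumeration --- that the limit points are precisely the roots of $\Phi_r$, $\Psi_r$ and $\mathcal{X}$, in the stated order, and nothing else --- and your sketch defers exactly this. You write that the polynomials ``should emerge'' from reading off the branches of the Schur-type expansion, and you yourself flag the completeness of the enumeration as ``the main obstacle,'' to be handled by ``delicate, essentially case-by-case control'' that is never carried out. Deleting those placeholder sentences leaves nothing that distinguishes the correct list $\{\varphi_r,\psi_r,\chi\}$ from any other candidate list, so the theorem is not proved.

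There is also a technical gap earlier in the outline. Regular Pisot numbers approaching a limit point (cf.\ Table~\ref{tab:regular}) have minimal polynomials of unbounded degree, so the associated rational functions $P_n/P_n^*$ have unbounded degree as well; a normal-families argument then only gives locally uniform convergence along subsequences to some function bounded in the disc, which need not be rational and need not be of the form $P/P^*$ for the limiting Pisot number. The identification of limit points of $S$ with particular limit functions in the Dufresnoy--Pisot family is itself a nontrivial step of the theory (it requires the integrality of the Taylor coefficients and the correct normalization of the numerator, not just $A=P$), and your sketch treats it as automatic. In short: the framework you name is the historically correct one, but as written the argument establishes neither that the listed numbers are limit points nor that the list is complete.
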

A description of the Pisot numbers approaching these limit points
    was given by Talmoudi \cite{Talmoudi78}.
Regular Pisot numbers are defined as
    the Pisot roots of the polynomials in Table \ref{tab:regular}.
Pisot numbers that are not regular Pisot numbers are called irregular
    Pisot numbers.
For each of these limit points ($\varphi_r, \psi_r$ or $\chi$), there
    exists an $\epsilon$,
    (dependent on the limit point) such that all Pisot numbers in
    an $\epsilon$-neighbourhood of this limit point are these regular
    Pisot numbers.
The Pisot root of the defining polynomial approaches the limit point
    as $n$ tends to infinity.
It should be noted that these polynomials are not necessarily minimal, and
    may contain some cyclotomic factors.
Also, they are only guaranteed to have a Pisot number root for
    sufficiently large $n$.

\begin{table}[H]
\begin{center}
\begin{tabular}{|l|l|}
\hline
Limit Points & Defining polynomials \\
\hline
$\varphi_r$  & $\Phi_r(x) x^n\pm (x^r-x^{r-1}+1)$ \\
             & $\Phi_r(x) x^n\pm (x^r-x+1)$ \\
             & $\Phi_r(x) x^n\pm (x^r+1)(x-1)$ \\
\hline
$\psi_r$     & $\Psi_r(x) x^n\pm(x^{r+1}-1)$\\
             & $\Psi_r(x) x^n\pm(x^{r}-1)/(x-1)$ \\
\hline
$\chi$       & $\mathcal{X}(x) x^n\pm(x^3+x^2-x-1)$ \\
             & $\mathcal{X}(x) x^n\pm(x^4-x^2+1)$  \\
\hline
\end{tabular}
\end{center}
\caption{Regular Pisot numbers}
\label{tab:regular}
\end{table}

Computationally, Boyd \cite{Boyd84,Boyd85} has given an algorithm that will find all Pisot numbers
    in an interval, where, in the case of limit points, the algorithm can detect
    the limit points and compensate for them.

Garsia \cite{Garsia63} introduced a new notion associated with a Bernoulli convolution. Namely, put
\[
D_n(\be)=\left\{x\in I_\be: x=\sum_{k=1}^n a_k\be^{-k}\,\, \text{with}\,\, a_k\in\{0,1\}\right\}
\]
and for $x\in D_n(\be)$,
\begin{equation}\label{eq:pnx}
p_n(x)=\#\left\{(a_1,\dots,a_n)\in\{0,1\}^n : x=\sum_{k=1}^n a_k \be^{-k}\right\}.
\end{equation}
Finally, put
\[
H_\be^{(n)}=-\sum_{x\in D_n(\be)} \frac{p_n(x)}{2^n}\log \frac{p_n(x)}{2^n}
\]
and
\[
H_\be=\lim_{n\to\infty}\frac{H_\be^{(n)}}{n\log\be}
\]
(it was shown in \cite{Garsia63} that the limit always exists). The value $H_\be$ is called {\em Garsia's entropy}.

Obviously, if $\be$ is transcendental or algebraic but not satisfying an algebraic equation with coefficients $\{-1,0,1\}$, then all the sums $\sum_{k=1}^n a_k\be^{-k}$ are distinct, whence $p_n(x)=1$ for any $x\in D_n(\be)$, and $H_\be=\log2/\log\be>1$.

However, if $\be$ is Pisot, then it was shown in \cite{Garsia63} that $H_\be<1$ -- which means in particular that $\be$ does satisfy an equation with coefficients $\{0,\pm1\}$. Furthermore, Garsia also proved that if $H_\be<1$, then $\mu_\be$ is singular.

In 1991 Alexander and Zagier in \cite{AlexanderZagier91} managed to evaluate $H_\be$ for the golden ratio $\be=\tau$ with an astonishing accuracy. It turned out that $H_\tau$ is close to 1 -- in fact $H_\tau\approx0.9957$. Grabner, Kirschenhofer and Tichy \cite{GrabnerKirschenhoferTichy02} extended this method to the multinacci numbers; in particular, $H_{\tau_3}\approx0.9804, \ H_{\tau_4}\approx 0.9867$, etc. They also showed that $H_{\tau_m}$ is strictly increasing for $m\ge3$, and $H_{\tau_m}\to1$ as $m\to\infty$ exponentially fast.

The method suggested in \cite{AlexanderZagier91} has, however, its limitations and apparently cannot be extended to non-multinacci Pisot parameters $\be$. Consequently, no numerical value for $H_\be$ is known for any non-multinacci Pisot $\be$ -- not even a lower bound.

The main goal of this paper is to present a universal lower bound for $H_\be$ for $\beta$ a Pisot number in (1,2). We prove that $H_\be>0.81$ for all such $\be$ (Theorem~\ref{thm:main}) and improve this bound for certain ranges of $\be$ (see discussion in Remark~\ref{rmk:0.81} and Proposition~\ref{prop:<1.7}).

\section{The maximal growth exponent}

Denote by $\E_n(x;\be)$ the set of all 0-1 words of length~$n$ which may act as prefixes of $\be$-expansions of $x$. We first prove a simple characterization of this set:

\begin{lemma}\label{lem:1}
We have
\[
\E_n(x;\be)=\left\{(a_1,\dots, a_n)\in\{0,1\}^n \mid 0\le
x-\sum_{k=1}^n a_k\be^{-k} \le\frac{\be^{-n}}{\be-1} \right\}.
\]
\end{lemma}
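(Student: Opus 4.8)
The plan is to unwind the meaning of ``prefix'' and reduce the lemma to a statement about the range of admissible tails. By definition, $(a_1,\dots,a_n)\in\E_n(x;\be)$ exactly when there exist further digits $a_{n+1},a_{n+2},\dots\in\{0,1\}$ with $x=\sum_{k\ge1}a_k\be^{-k}$. Writing
\[
y=x-\sum_{k=1}^n a_k\be^{-k}
\]
for the residual, this says precisely that $y$ is realizable as a tail sum $\sum_{k>n}a_k\be^{-k}$. So the whole statement reduces to identifying the set of possible tail sums.

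First I would factor out $\be^{-n}$ and write the set of tails as $\be^{-n}\,T$, where $T=\{\sum_{j\ge1}b_j\be^{-j}:b_j\in\{0,1\}\}$. Taking all digits $0$ and all digits $1$ shows at once that $T\subseteq[0,1/(\be-1)]$, which already gives the forward inclusion of the lemma: every prefix forces $0\le y\le \be^{-n}/(\be-1)$. The substantive content is the reverse inclusion $[0,1/(\be-1)]\subseteq T$, i.e.\ that \emph{every} point of the interval is attainable, and this is the one place where the hypothesis $\be<2$ enters.

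I would establish the reverse inclusion by a greedy construction with an explicit invariant. Set $L=1/(\be-1)$ (note $L\ge1$ since $\be\le2$), fix a target $y\in[0,L]$, and put $r_0=y$. Having chosen $b_1,\dots,b_{j-1}$ with remainder $r_{j-1}=y-\sum_{i<j}b_i\be^{-i}$ satisfying $0\le r_{j-1}\le L\be^{-(j-1)}$, set $b_j=1$ if $r_{j-1}\ge\be^{-j}$ and $b_j=0$ otherwise, and let $r_j=r_{j-1}-b_j\be^{-j}$. A short case check preserves the invariant: if $b_j=0$ then $0\le r_j<\be^{-j}\le L\be^{-j}$, while if $b_j=1$ then $r_j\ge0$ and $r_j\le L\be^{-(j-1)}-\be^{-j}=\be^{-j}(L\be-1)\le L\be^{-j}$, the last step being exactly the identity $L(\be-1)=1$. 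Hence $0\le r_j\le L\be^{-j}\to0$, so $y=\sum_{j\ge1}b_j\be^{-j}\in T$.

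Combining the two inclusions gives $T=[0,1/(\be-1)]$, so the set of tails is $[0,\be^{-n}/(\be-1)]$, and $(a_1,\dots,a_n)\in\E_n(x;\be)$ if and only if $y$ lies in this interval --- which is exactly the asserted double inequality. I expect the main obstacle to be nothing deep but rather getting the greedy invariant and its boundary case clean: the inequality $r_j\le L\be^{-j}$ in the case $b_j=1$ holds with equality at the extreme, so it is precisely the borderline $\be\le2$ that keeps the two self-similar halves $\tfrac1\be T$ and $\tfrac1\be+\tfrac1\be T$ overlapping and prevents a gap (a Cantor-type hole) from opening in $T$.
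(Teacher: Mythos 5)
Your proof is correct and takes essentially the same route as the paper: both arguments reduce membership in $\E_n(x;\be)$ to the statement that the set of possible tail sums $\sum_{k>n}a_k\be^{-k}$ is exactly the interval $[0,\be^{-n}/(\be-1)]$. The only difference is that the paper invokes without proof the standard fact that every $y\in[0,1/(\be-1)]$ has a $\be$-expansion when $1<\be<2$, whereas you verify this fact explicitly by a greedy-digit induction, which makes your write-up more self-contained but does not change the approach.
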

\begin{proof} Let $(a_1,\dots,a_n)\in\E_n(x;\be)$; then the fact that there exists a $\be$-expansion of $x$ beginning with this word, implies $\sum_1^n a_k\be^{-k}\le x \le \sum_1^n a_k\be^{-k}+\frac{\be^{-n}}{\be-1}$, the second inequality following from $\sum_{n+1}^\infty a_k\be^{-k}\le \frac{\be^{-n}}{\be-1}$.

The converse follows from the fact that if $0\le y\le\frac1{\be-1}$, where $y=\be^n\bigl(x-\sum_{k=1}^n a_k\be^{-k}\bigr)$, then $y$ has a $\be$-expansion $(a_{n+1},a_{n+2},\dots)$.
\end{proof}

The following lemma will play a central role in this paper.

\begin{lemma}\label{lem:lbound}
Suppose there exists $\la\in(1,2)$ such that $\#\E_n(x;\be)=O(\la^n)$ for all $x\in I_\be$. Then
\begin{equation}\label{eq:lbound}
H_\be\ge \log_\be \frac2{\la}.
\end{equation}
\end{lemma}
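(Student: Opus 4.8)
The plan is to compare the exact multiplicity $p_n(x)$ with the prefix count $\#\E_n(x;\be)$ and feed this comparison into the formula for $H_\be^{(n)}$. First I would rewrite the finite-level entropy in a more convenient form. Since every word $(a_1,\dots,a_n)\in\{0,1\}^n$ contributes to exactly one $x\in D_n(\be)$, we have $\sum_{x\in D_n(\be)}p_n(x)=2^n$, and expanding the logarithm in the definition of $H_\be^{(n)}$ gives
\[
H_\be^{(n)}=n\log 2-\frac1{2^n}\sum_{x\in D_n(\be)}p_n(x)\log p_n(x).
\]
Thus a lower bound on $H_\be^{(n)}$ is equivalent to an upper bound on $\sum_x p_n(x)\log p_n(x)$.

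The key step is the pointwise inequality $p_n(x)\le\#\E_n(x;\be)$. Indeed, any word $(a_1,\dots,a_n)$ counted by $p_n(x)$ satisfies $x-\sum_{k=1}^n a_k\be^{-k}=0$, which in particular lies in the interval $[0,\be^{-n}/(\be-1)]$; by the characterization in Lemma~\ref{lem:1} this word therefore belongs to $\E_n(x;\be)$. Combined with the hypothesis, this yields a uniform bound $p_n(x)\le C\la^n$ for some constant $C$ independent of $x$ and $n$, whence $\log p_n(x)\le n\log\la+\log C$ for every $x\in D_n(\be)$.

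Plugging this in, and using $\sum_x p_n(x)=2^n$ together with $p_n(x)\ge0$, gives
\[
\sum_{x\in D_n(\be)}p_n(x)\log p_n(x)\le(n\log\la+\log C)\,2^n,
\]
so that $H_\be^{(n)}\ge n(\log2-\log\la)-\log C=n\log(2/\la)-\log C$. Dividing by $n\log\be$ and letting $n\to\infty$, the additive constant washes out and we obtain $H_\be\ge\log_\be(2/\la)$, as required.

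The main obstacle I anticipate is the uniformity in $x$ of the $O(\la^n)$ estimate: the statement ``$\#\E_n(x;\be)=O(\la^n)$ for all $x\in I_\be$'' must be read as a single constant $C$ valid simultaneously for every $x$, since the argument sums over all $x\in D_n(\be)$ at the same level $n$. Once this uniformity is in hand the rest is a direct substitution; the comparison $p_n(x)\le\#\E_n(x;\be)$ via Lemma~\ref{lem:1} is precisely what converts a statement about approximate prefixes into control of the exact multiplicities appearing in Garsia's entropy.
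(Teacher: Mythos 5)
Your proof is correct, but it takes a genuinely different route from the paper. The paper's proof is probabilistic: it invokes Lalley's theorem (equation (\ref{eq:sqrtn})), which says $\sqrt[n]{p_n(a_1,\dots,a_n)}\to 2\be^{-H_\be}$ for $\mathbb P$-a.e.\ sequence, and then uses the comparison $p_n\le\#\E_n$ to force $2\be^{-H_\be}\le\la$. Your argument replaces this citation with an elementary computation straight from Garsia's definition: the identity $\sum_{x\in D_n(\be)}p_n(x)=2^n$, the rewriting $H_\be^{(n)}=n\log2-2^{-n}\sum_x p_n(x)\log p_n(x)$, and the same pointwise comparison $p_n(x)\le\#\E_n(x;\be)$ (which, as you note, is immediate from Lemma~\ref{lem:1}, since $0$ lies in the admissible interval). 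What your approach buys is self-containedness: you need no external result beyond the existence of the limit defining $H_\be$ (Garsia), and in fact your estimate $H_\be^{(n)}\ge n\log(2/\la)-\log C$ holds at every finite level, so it would even yield the bound for $\liminf_n H_\be^{(n)}/(n\log\be)$ without existence of the limit. What the paper's approach buys is brevity and conceptual alignment with the rest of the article, where a.e.\ growth rates and local dimensions (Section~\ref{sec:multi}) are the central objects. One point you handle correctly and which deserves the emphasis you give it: both arguments require the hypothesis $\#\E_n(x;\be)=O(\la^n)$ to be uniform in $x$ (a single constant $C$), since in your sum $x$ ranges over all of $D_n(\be)$ at level $n$, and in the paper's proof the relevant point $x=\sum_{k=1}^n a_k\be^{-k}$ moves with $n$. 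This uniform reading is the intended one, and it is exactly what the application provides: Proposition~\ref{prop:upb} produces the bound $\#\E_n(x;\be)\le 2^{n_0}R^{k_n}$ with constants independent of $x$.
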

\begin{proof}Let $(a_1,a_2,\dots)$ be a $\be$-expansion of $x$. Denote by $p_n(a_1,\dots,a_n)$ the number of 0-1 words $(a_1',\dots,a_n')$ such that $\sum_{k=1}^n a_k\be^{-k}=\sum_{k=1}^n a'_k\be^{-k}$. Then, as was shown by Lalley \cite[Theorems~1,2]{Lalley98},
\begin{equation}\label{eq:sqrtn}
\sqrt[n]{p_n(a_1,\dots,a_n)} \to 2\be^{-H_\be},\quad \mathbb P\text{-a.e.}\ (a_1,a_2,\dots)\in \{0,1\}^\BN.
\end{equation}
Since $p_n(a_1,\dots,a_n)\le \#\E_n(x;\be)$ for $x=\sum_{k=1}^n a_k\be^{-k}$, we have $\sqrt[n]{p_n(a_1,\dots,a_n)}\le\e_n \la$ with $\e_n\to1$, which, together with (\ref{eq:sqrtn}), implies (\ref{eq:lbound}).
\end{proof}

Define the {\em maximal growth exponent} as follows:
\begin{equation*}
\mathfrak M_\beta:=\sup_{x\in I_\be} \limsup_{n\to\infty}\sqrt[n]{\#\E_n(x;\be)}.
\end{equation*}
It follows from Lemma~\ref{lem:lbound} that
\begin{equation}\label{eq:mge}
H_\be\ge \log_\be \frac2{\mathfrak M_\beta}.
\end{equation}

Computing $\mathfrak M_\be$ explicitly for a given Pisot $\be$ looks like a difficult problem (unless $\be$ is multinacci -- see Section~\ref{sec:multi}), so our goal is to obtain good upper bounds for $\mathfrak M_\be$ for various ranges of $\be$. To do that, we will need the following simple, but useful, claim.

\begin{prop}\label{prop:upb}
If $\#\E_{n+r}(x;\be)\le R\cdot\#\E_n(x;\be)$ for all $n\ge n_0$ for
    some $n_0 \geq 1$ and some $r\ge2$, then $\mathfrak M_\be\le \sqrt[r]R$.
\end{prop}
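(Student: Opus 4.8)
The plan is to fix an arbitrary $x\in I_\be$ and to prove that $\limsup_{n\to\infty}\sqrt[n]{\#\E_n(x;\be)}\le\sqrt[r]R$; since this bound is independent of $x$, taking the supremum over $x$ in the definition of $\mathfrak M_\be$ then yields the proposition. The argument is a routine iteration of the submultiplicative hypothesis, the only care being the bookkeeping along the residue classes modulo $r$.

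First I would abbreviate $a_n:=\#\E_n(x;\be)$ and note that each $a_n$ is finite; indeed, by Lemma~\ref{lem:1} the set $\E_n(x;\be)$ is a subset of $\{0,1\}^n$, so $a_n\le 2^n$. The hypothesis reads $a_{n+r}\le R\,a_n$ for all $n\ge n_0$. Iterating this inequality along an arithmetic progression with common difference $r$, one obtains, for every base index $b$ with $n_0\le b<n_0+r$ and every integer $k\ge0$,
\[
a_{b+kr}\le R^{k}a_b.
\]

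Next, given $n\ge n_0$, write $n=n_0+kr+s$ with $k=\lfloor (n-n_0)/r\rfloor\ge0$ and $0\le s<r$, and set $b:=n_0+s\in\{n_0,\dots,n_0+r-1\}$, so that $n=b+kr$ and $k\le (n-n_0)/r$. Putting $C_x:=\max_{n_0\le b<n_0+r}a_b$, a finite constant depending only on $x$ through the finitely many initial terms, the displayed bound gives $a_n\le C_x\,R^{(n-n_0)/r}$. Taking $n$-th roots,
\[
\sqrt[n]{a_n}\le C_x^{1/n}\,R^{(n-n_0)/(rn)},
\]
and the right-hand side tends to $R^{1/r}$ as $n\to\infty$, since $C_x^{1/n}\to1$ and $(n-n_0)/(rn)\to 1/r$. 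Hence $\limsup_{n\to\infty}\sqrt[n]{a_n}\le\sqrt[r]R$ for the fixed $x$.

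Finally, because the right-hand side $\sqrt[r]R$ does not depend on $x$, taking the supremum over $x\in I_\be$ preserves the inequality and gives $\mathfrak M_\be\le\sqrt[r]R$. I do not anticipate a genuine obstacle: the only point to monitor is that the per-$x$ constant $C_x$ enters solely through the factor $C_x^{1/n}$, which tends to $1$ and therefore cannot affect the $\limsup$; this is exactly why the dependence of $C_x$ on $x$ is harmless and no uniformity is needed. (The estimate in fact uses only $r\ge1$; the hypothesis $r\ge2$ is relevant for the later applications rather than for this bound.)
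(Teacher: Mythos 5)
Your proof is correct and follows essentially the same argument as the paper: iterate the hypothesis to get a geometric bound, then take $n$-th roots and let $n\to\infty$. The only (harmless) difference is in handling indices between multiples of $r$: the paper bounds $\#\E_n(x;\be)\le\#\E_{n_0+rk_n}(x;\be)$ using the easy but unstated monotonicity of $n\mapsto\#\E_n(x;\be)$, whereas you iterate separately along each residue class modulo $r$ and absorb the finitely many base terms into the constant $C_x$, which disappears after taking $n$-th roots exactly as you say.
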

\begin{proof}By induction,
\[
\#\E_{n_0+rk}(x;\beta)\le \#\E_{n_0}(x;\be)R^k\le 2^{n_0}R^k.
\]
Let $n \geq n_0$, and choose $k_n$ such that
$n_0+r(k_n-1)\le n<n_0+rk_n$.
Then \[ \#\E_{n}(x;\be)\le \#\E_{n_0 + r k_n}(x;\be)\le 2^{n_0}R^{k_n}.\]
The result follows from
   \[ \lim_{n\to\infty} \left(2^{n_0} R^{k_n}\right)^{1/n} =
      \lim_{n\to\infty} 2^{n_0/n} R^{k_n/n}= R^{1/r} = \sqrt[r]{R} \]
by noticing that $\frac{n_0}{n} \to 0$ and $\frac{k_n}{n} \to \frac{1}{r}$ as
     $n \to \infty$.
\end{proof}

\begin{example}
For the examples in this paper, we give only $4$~digits of precision.  In fact much higher precision was used in the computations (about $50$~digits). Let us consider a toy example showing how to apply (\ref{eq:mge}) to $\beta = \be_*\approx1.6737$, the largest root of $x^5-2x^4+x^3-x^2+x-1$ (which is a Pisot number).

Let us first determine $\#\E_{2}(x;\be_*)$, dependent upon $x$. After that we will determine $\max\limits_{x \in I_{\beta_*}} \#\E_2(x;\be_*)$.
For ease of notation, we will denote
    $m_n(\beta) = \max\limits_{x \in I_\beta} \#\E_n(x;\be)$.
Hence in this case, we are determining $m_2(\beta_*)$.
Consider the values of $x$ such that $x = \frac{a_1}{\beta} +
    \frac{a_2}{\beta^2} + \cdots$ for initial string $(a_1, a_2)$. We see that
    \[ \frac{a_1}{\beta} + \frac{a_2}{\beta^2} \leq x \leq
       \frac{a_1}{\beta} + \frac{a_2}{\beta^2} + \frac{1}{\beta^3}
       + \frac{1}{\beta^4} + \dots
       = \frac{a_1}{\beta}+ \frac{a_2}{\beta^2 }+ \frac{1/\beta^3}{1-1/\beta}.
       \]
This gives us upper and lower bounds for possible initial strings of $(a_1, a_2)$.
\begin{table}[H]
\begin{center}
\begin{tabular}{|l|l|l|}
\hline
$(a_1, a_2)$ & Lower Bound  & Upper Bound  \\
\hline
$(0,0)$  &  $0.$             & $0.5300$ \\
\hline
$(0,1)$  &  $0.3570$         & $0.8870$ \\
\hline
$(1,0)$  &  $0.5975$         & $1.1275$ \\
\hline
$(1,1)$  &  $0.9545$         & $1.4845$\\
\hline
\end{tabular}
\end{center}
\caption{Upper and lower bounds for $x$ for initial strings
         of length 2 of its $\beta$-expansion}
\label{tab:2.1}
\end{table}

We next partition possible values of $x$ in $I_\beta = [0, 1.4845]$ based on these upper and lower bound.
\begin{table}[H]
\begin{center}
\begin{tabular}{|l|l|}
\hline
Range (approx)& Possible initial string of expansion\\
\hline
$x \in (0., 0.3570)$ & $(0,0)$ \\
\hline
$x \in (0.3570, 0.5300)$ & $(0,0), (0,1)$ \\
\hline
$x \in (0.5300, 0.5975)$ & $(0,1)$ \\
\hline
$x \in (0.5975, 0.8870)$ & $(0,1), (1,0)$ \\
\hline
$x \in (0.8870, 0.9545)$ & $(1,0)$ \\
\hline
$x \in (0.9545, 1.1275) $& $(1,0), (1,1)$ \\
\hline
$x \in (1.1275, 1.4845) $ &  $(1,1)$\\
\hline
\end{tabular}
\end{center}
\caption{Initial strings $(a_1, a_2)$, depending on $x \in (0, 1.4875)$.}
\label{tab:blah}
\end{table}
This immediately shows that $m_2(\beta_*) = 2$.
Hence, by induction, $\#\E_{n+2}(x;\be_*)\le 2 \#\E_n(x;\be_*)$, whence by Proposition~\ref{prop:upb}, $\mathfrak M_{\be_*}\le\sqrt2$.
By (\ref{eq:mge}), $H_{\be_*}>\frac12\log_{\be_*} 2 \approx 0.6729$.
\end{example}

Obviously, this bound is rather crude, and in the rest of the paper we will refine this method to obtain better bounds.
One thing we need to do is show how one would use this for an entire range of $\beta$ values, instead of just for a specific value.
For instance, in the example above, we could show that $m_2(\beta) = 2$ for all
    $\beta > \tau = \frac{1+\sqrt{5}}{2}$.
In addition, we will want to show how one would do this calculation for algebraic $\beta$, where we can take advantage of the algebraic nature of $\beta$.

\section{The algorithm}

Let us consider our toy example of $\beta=\be_*$ again.
We see that for each initial string $(a_1, a_2)$, we got a lower and upper bound for possible $x = a_1 \beta^{-1} + a_2 \beta^{-2} + \cdots$. For example, for $(a_1, a_2) = (1,0)$ these were approximately $0.5975$ and $1.1275$ respectively. We then used these lower and upper bounds to partition $I_\be$ into ranges.  We next show that if the relative order of these lower and upper bounds is not changed, then the partitioning of $I_\beta$ into ranges can be done in exactly the same way.

Put $(a_1,\dots, a_{k})_L=\sum_1^k a_j\be^{-j}$ and $(a_1,\dots, a_{k})_{U}=\sum_1^k a_j\be^{-j}+\frac{\be^{-k}}{\be-1}$, i.e., $[(a_1,\dots, a_{k})_{L}, (a_1,\dots, a_{k})_{U}]$ is the interval of all possible values of $x$ whose $\be$-expansion starts with $(a_1,\dots, a_k)$. For example, $(1,0)_L = 0.5975\dots$ and $(1,0)_U = 1.1275\dots$ 
This says that if
\[
(0,0)_L < (0,1)_L < (0,0)_U < (1,0)_L < (0,1)_U < (1,1)_L < (1,0)_U < (1,1)_U,
\]
then we have
\begin{table}[H]
\begin{center}
\begin{tabular}{|l|l|}
\hline
Range & Possible initial string of $\beta$-expansion of $x$ \\
\hline
$x \in ((0,0)_L, (0,1)_L)$ & $(0,0)$ \\
\hline
$x \in ((0,1)_L, (0,0)_U)$ & $(0,0), (0,1)$ \\
\hline
$x \in ((0,0)_U, (1,0)_L)$ & $(0,1)$ \\
\hline
    \vdots           & \vdots\\
    \hline
\end{tabular}
\end{center}
\caption{Upper and lower bounds for $x$ for initial strings of length~2 of its $\beta$-expansion}
\end{table}
\noindent as the equivalent table to Table \ref{tab:2.1}.
For fixed $\beta$, these $(a_1, a_2, \dots, a_k)_L$ and $(a_1, a_2, \dots, a_k)_U$ are called
    {\em critical points for $\beta$} or simply {\em critical points}.

For each inequality, there are precise values of $\beta$ for where the inequality will hold.
For example, knowing that $\beta>1$, we get that
\[
(0,0)_U  <   (1,0)_L
\iff  \frac{\beta^{-3}}{1-\beta^{-1}}  <  \frac{1}{\beta}
\iff  \frac{1+\sqrt{5}}{2}  <   \beta
\]
So if $\beta > \tau = 1.618\dots$, then $(0,0)_U < (1,0)_L$.

This observation means that we need to determine for which
values of $\beta$ we have $(a_1, a_2)_{L/U} = (a_1',a_2')_{L/U}$. We will call these values of $\beta$ the {\em transitions points} which will affect $m_n(\beta)$.

There are some immediate observations we can make that reduces the number of equations to be checked.
\begin{itemize}
\item $(a_1, a_2)_L = (a_1', a_2')_L$ and  $(a_1, a_2)_U = (a_1', a_2')_U$
    have the same set of solutions.
\item $(a_1, a_2)_L = (a_1, a_2)_U$ has no solutions.
\item If $a_1 \leq a_1'$ and $a_2 \leq a_2'$ then none of
\begin{eqnarray*}
(a_1, a_2)_L & = & (a_1', a_2')_L  \\
(a_1, a_2)_L & = & (a_1', a_2')_U  \\
(a_1, a_2)_U & = & (a_1', a_2')_U
\end{eqnarray*}
have solutions in $I_\be$.
\end{itemize}
The first two observations were used when finding all transition points.
The last observation was made by one of the referees after all
    of the computations were completed, and hence was not used as a means
    of eliminating equations to check.

In our length 2 example again, we need to check (after elimination by the
    three observations above),
\[
\begin{array}{llll}
(0,0)_U = (0,1)_L,  &
(0,0)_U = (1,0)_L,  &
(0,0)_U = (1,1)_L  &
(0,1)_L = (1,0)_L, \\

(0,1)_L = (1,0)_U   &
(1,0)_U = (1,1)_L  &
(0,1)_U = (1,0)_L, &
(0,1)_U = (1,1)_L.
\end{array}
\]
Solving all of these equations, we see that the only transition points in $(1,2)$ for length~2 are $\sqrt{2} \approx 1.4142$ and $\tau \approx 1.6180$.

So, given that we know $m_n(\be_*) = 2$, and that we have a transition point at $\tau=1.618\dots$, we can say for all $\beta \in (\tau, 2)$ that $m_2(\beta) = 2$.
Using a similar method, we can show that for $\beta \in (\sqrt{2}, \tau)$ that $m_2(\beta) = 3$, and that for $\beta \in (1, \sqrt{2})$ that $m_2(\beta) = 4$.

It is worth noting that these results do not say what happens when $\beta = \sqrt{2}$ or $\beta = \tau$. The transition points will need to be checked separately.

There is one not so obvious, but important observation that should be made at this point. It is possible for an inequality to hold for $\beta$, where $\beta$ is in a disjoint union of intervals.

For example, we have
    \[ (0,1,1,1,1)_L < (1,0,0,0,1)_U \]
    for $\beta \in (1,\sigma) \cup (\tau, 2)$, where $\sigma^3-\sigma^2-1=0$,
    with $\sigma \approx 1.4656$.
This means that it is possible for $m_n(\beta)$ to not be an
    decreasing function with respect to $\beta$.
For example $m_5(1.81) = 3$,
    $m_5(1.85) = 4$ and  $m_5(1.88) = 3$.
This phenomenon appears to become more common for larger values of $n$.

\section{Numerical computations}

In this section we will talk about the specific computations, and how they were done.
The process started with length $n =2$, and then progressively worked on
    $n = 3, 4, 5, \dots$ up to $n=14$.
We used this process to find the global minimum for all $\beta \in (1.6, 2)$
    minus a finite set of transition points.
The code for for finding transition points, numerical lower bounds, and symbolic lower bounds can be found on the homepage of the first author \cite{HomePageWat}.

\begin{itemize}
\item For each length in order, find all solutions $\beta$ to
    \[(a_0, a_1, \dots, a_{n-1})_{L/U} = (a_0', a_1', \dots, a_{n-1}')_{L/U}\]
    subject to the conditions mentioned in the previous section.
\item For each of these solutions, check to see if the transition point is a
    Pisot number.  If so, we will have to check this transition point
    using the methods of Section \ref{sec:Symbolic}.
\item Use these transition points to partition $(1,2)$ into subintervals, upon
    which $m_n(\beta)$ is constant.
\item For the midpoint of each of these subintervals, compute
    $m_n(\beta)$,
\end{itemize}

To compute $m_n(\beta)$, we first consider all 0-1~sequences $w_1, w_2, \dots$ of length $n$. For each of these sequences, find their upper and lower bounds, say
    $\{\alpha_1, \alpha_2, \dots\}
    = \{{w_{1}}_L, {w_1}_U, {w_2}_L, \linebreak[2] {w_2}_U, \dots \}$.
Here the $\alpha_i$ are reorder such that $\alpha_i < \alpha_{i+1}$ for all $i$.
We then loop through each interval $(\alpha_i, \alpha_{i+1})$ and check how many of the $w_i$ are valid on this interval.
We keep track of the interval with the maximal set of valid $w_i$.

It should be noted that the number of times we needed to run this algorithm was rather big. At level~14, we had slightly more than $300,000$ tests where we needed to find the maximal set.

These calculations were done in Maple on 22~separate 4~CPU, 2.8~GHz machines each with 8~Gigs of RAM. These calculations were managed using the N1 Grid Engine. This cluster was capable of performing 88 simultaneous computations.

After this, we looked at all of these subintervals between transition points, and calculated the lower bounds for $H_\beta$ at the endpoints, to find a global minimum. This gives rise to the main result of the paper:

\begin{thm}
\label{thm:0.81 i}
If $\beta > 1.6$, and $\beta$ is not a transition point for $n\le14$, then $H_\beta>0.81$.
\end{thm}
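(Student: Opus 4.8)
The plan is to turn the abstract lower bound (\ref{eq:mge}) into a finite computation by exploiting a sub-multiplicativity property of the sets $\E_n(x;\be)$. The first step I would record is the estimate
\[
\#\E_{N+n}(x;\be)\ \le\ m_n(\be)\cdot\#\E_N(x;\be)\qquad(\text{all }x\in I_\be,\ N\ge1,\ n\ge1).
\]
This is immediate from Lemma~\ref{lem:1}: if $(a_1,\dots,a_N)$ is the length-$N$ prefix of a word in $\E_{N+n}(x;\be)$, then a short computation with the geometric tail shows it already lies in $\E_N(x;\be)$; and writing $y=\be^{N}\bigl(x-\sum_{k=1}^{N}a_k\be^{-k}\bigr)\in I_\be$, the admissible continuations $(a_{N+1},\dots,a_{N+n})$ are precisely the elements of $\E_n(y;\be)$, of which there are at most $m_n(\be)$. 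Summing over the at most $\#\E_N(x;\be)$ prefixes gives the displayed bound. Applying Proposition~\ref{prop:upb} with $r=n$ and $R=m_n(\be)$ then yields $\mathfrak M_\be\le m_n(\be)^{1/n}$, so by (\ref{eq:mge})
\[
H_\be\ \ge\ b_n(\be):=\log_\be\frac{2}{m_n(\be)^{1/n}}=\frac{\log 2-\tfrac1n\log m_n(\be)}{\log\be}.
\]

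Next I would compute, for each $n\in\{2,\dots,14\}$, the step function $\be\mapsto m_n(\be)$ on $(1.6,2)$ using the algorithm of the preceding section: solve all the equations $(a_1,\dots,a_n)_{L/U}=(a_1',\dots,a_n')_{L/U}$ (after the stated reductions) to locate the finitely many transition points, form the union of these transition sets over $2\le n\le14$, and on each of the resulting open subintervals read off the constant values $m_2(\be),\dots,m_{14}(\be)$ by the maximal-set computation at an interior sample point. The decisive structural observation is that on such a subinterval the numerator $\log 2-\tfrac1n\log m_n(\be)$ is a nonnegative constant (since $1\le m_n(\be)\le 2^n$), so each $b_n$, and hence $b(\be):=\max_{2\le n\le14}b_n(\be)$, is a nonincreasing function of $\be$. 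Consequently the infimum of $b$ over each open subinterval equals its right-endpoint limit, evaluated with the left-hand value of $m_n$.

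It then remains to evaluate these finitely many right-endpoint values over all subintervals of $(1.6,2)$, take the global minimum, and verify that it is $>0.81$. By the monotonicity just noted, $b(\be)\ge\inf_{\text{subinterval}}b>0.81$ for every non-transition $\be$ in the relevant subinterval, which gives $H_\be\ge b(\be)>0.81$ and is exactly the assertion, transition points being excluded by hypothesis.

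The mathematical content here is light; the genuine obstacle is the size and reliability of the finite computation. Locating all transition points up to depth $14$ amounts to solving the pairing equations for essentially all pairs among the $2^{14}$ binary words, and the maximal-set step alone already involves of the order of $300{,}000$ cases at $n=14$. These must be carried out in sufficiently high precision and must correctly handle the phenomenon, noted earlier, that a single inequality between critical points can hold on a \emph{disjoint union} of $\be$-intervals, so that the partition of $(1.6,2)$ into subintervals is genuinely nontrivial. One also has to confirm empirically that depth $14$ together with the cutoff $\be>1.6$ suffices for the global minimum of $b$ to clear $0.81$: the bound degrades as $\be\downarrow1.6$, which is precisely why the complementary range is treated separately. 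Finally, any transition point that happens to be Pisot falls outside the monotonicity argument and would be verified individually by the symbolic method of Section~\ref{sec:Symbolic}, but such points are excluded from the present statement.
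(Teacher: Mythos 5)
Your proposal is correct and takes essentially the same route as the paper: the sub-multiplicativity bound $\#\E_{N+n}(x;\be)\le m_n(\be)\cdot\#\E_N(x;\be)$ (the paper's ``by induction'' step) fed into Proposition~\ref{prop:upb} and (\ref{eq:mge}), the transition-point partition of $(1.6,2)$ into subintervals on which each $m_n(\beta)$, $2\le n\le 14$, is constant, and a finite minimum check over endpoint values of the resulting lower bound. Your explicit monotonicity argument justifying evaluation at right endpoints is merely a detail the paper leaves implicit in its phrase ``calculated the lower bounds for $H_\beta$ at the endpoints.''
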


\begin{rmk}
\label{rmk:0.81}
This theorem is weaker than necessary for most values of $\beta$.
For specific ranges of values of $\beta$, we actually get a number of stronger results.
\begin{itemize}
\item For most $\beta \in (1.6, 2.0)$ have $H_\beta > 0.82$, (99.9 \%),
    and a majority (51.4\%) have $H_\beta > 0.87$.
    Here ``most'' is a bit misleading.  Almost every $\beta$ has
    $H_\beta = \log2/\log\be$.
    Of those that do not, there is no result that shows they
    should be evenly distributed, (and they most likely are not).
    So by ``most'' we mean that for some finite collection of intervals, that
    make up $99.9\%$ of $(1.6, 2.0)$ that {\em all} $\beta$ in this
    finite collection of intervals have $H_\beta > 0.82$.
\item The minimum occurs near $\tau_3 \approx 1.8392$,
    (See Figure~\ref{fig:1.83,1.85}).
\item For $\beta \in (1.6, 1.7)$ we have $H_\beta > 0.87$
    (Figure~\ref{fig:1.6,1.7}), and for $\beta$ near $2.0$ we have
    $H_\beta> 0.9$ (Figure~\ref{fig:1.98,2.0}).
\end{itemize}
\end{rmk}

\begin{figure}[H]
\[ \psfig{file=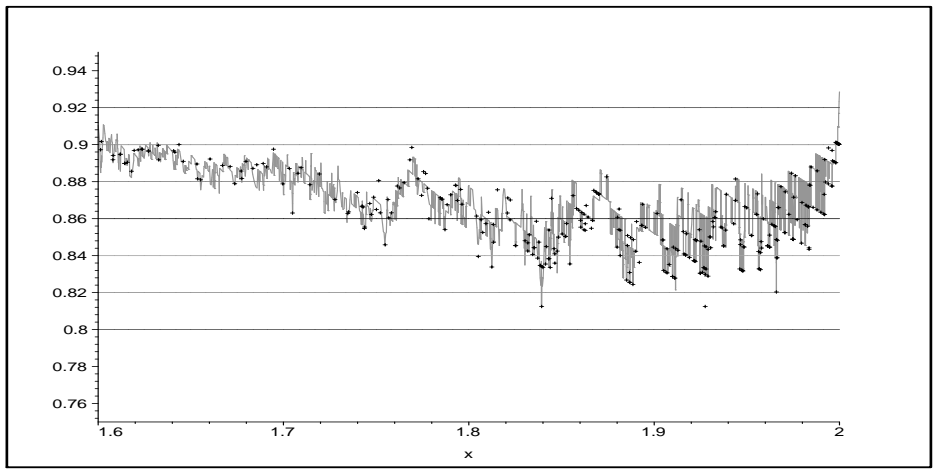,height=175pt,width=350pt,angle=270} \]
\caption{Lower bound for $H_\beta$, for Pisot $\beta \in (1.6, 2.0)$ and Pisot
    Transition points}
\label{fig:1.6,2.0}
\end{figure}
\begin{figure}[H]
\[ \psfig{file=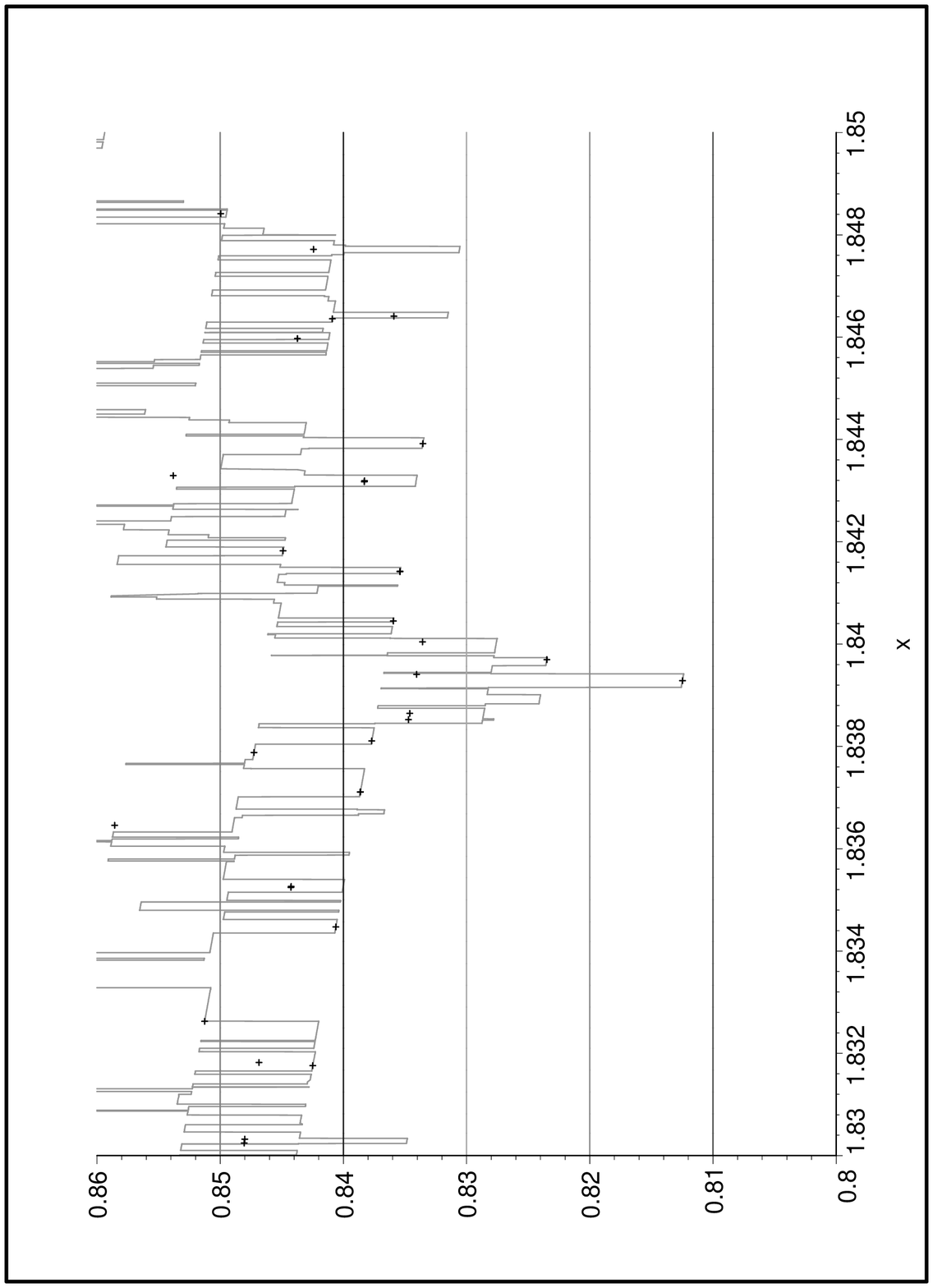,height=175pt,width=350pt,angle=270} \]
\caption{Lower bound for $H_\beta$, for Pisot $\beta \in (1.83, 1.85)$ and Pisot
    Transition points}
\label{fig:1.83,1.85}
\end{figure}
\begin{figure}[H]
\[ \psfig{file=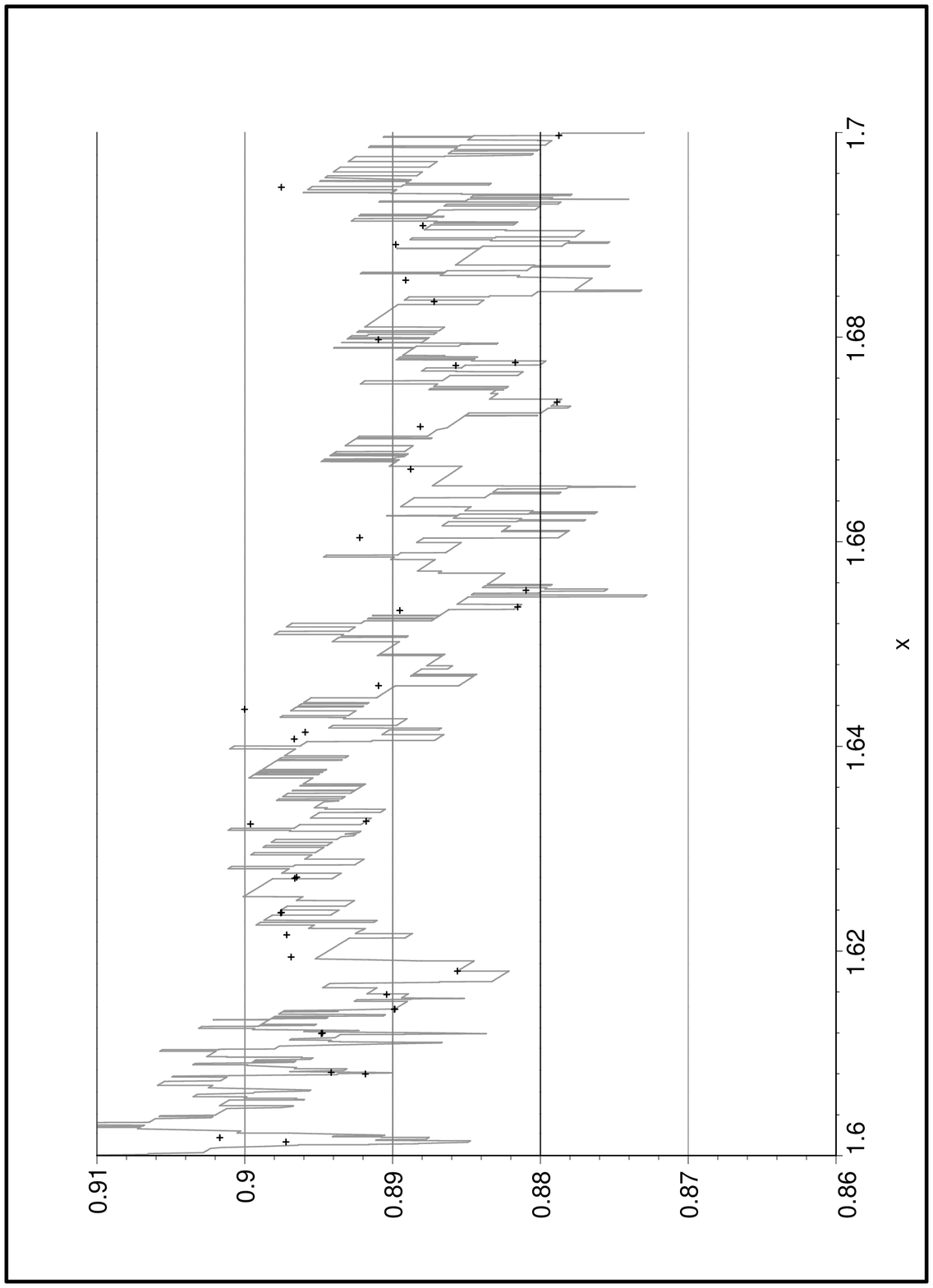,height=175pt,width=350pt,angle=270} \]
\caption{Lower bound for $H_\beta$, for Pisot $\beta \in (1.6, 1.7)$ and Pisot
    Transition points}
\label{fig:1.6,1.7}
\end{figure}
\begin{figure}[H]
\[ \psfig{file=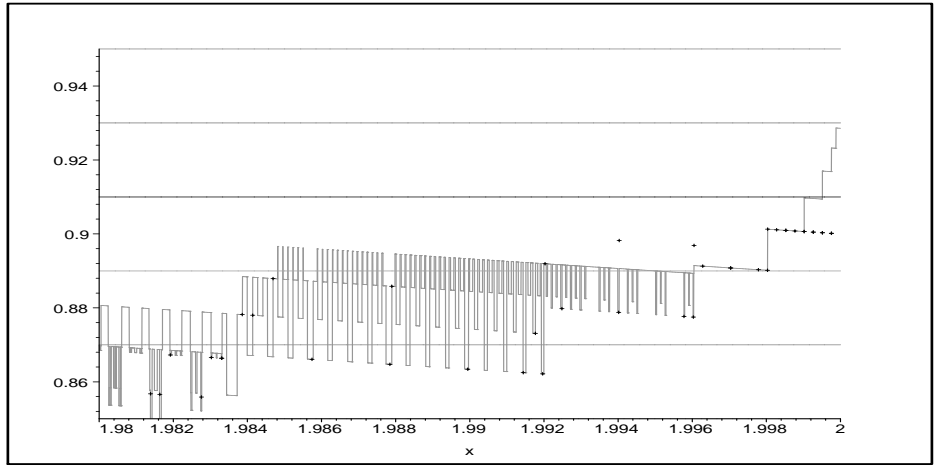,height=175pt,width=350pt,angle=270} \]
\caption{Lower bound for $H_\beta$, for Pisot $\beta \in (1.98, 2.0)$ and Pisot
    Transition points}
\label{fig:1.98,2.0}
\end{figure}

\section{Calculations for symbolic $\beta$}
\label{sec:Symbolic}

In the previous section, we showed for all but a finite number of Pisot numbers $\beta$ in $(1.6, 2)$ that $H_\beta > 0.81$.  To extend the result to all such $\beta$ in $(1,2)$, there are still some of Pisot numbers that will need to be checked individually.

These include the finite set of Pisot numbers less that 1.6
(of which there are 12), and the finite set of Pisot numbers that are also transition points (of which there are 427).
In particular, we get:

\begin{thm}
\label{thm:0.81 ii}
For all Pisot numbers $\beta < 1.6$ and all Pisot transition points (for $n \leq 14$), we have $H_\beta > 0.81$.
\end{thm}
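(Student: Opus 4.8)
The plan is to reduce the assertion, for each of these finitely many Pisot numbers, to a single exact computation of $m_r(\be)=\max_{x\in I_\be}\#\E_r(x;\be)$ for a conveniently chosen $r\le 14$. The starting point is a submultiplicativity property of the counts $\#\E_n(x;\be)$. If $(a_1,\dots,a_n)\in\E_n(x;\be)$ and we put $y=\be^n\bigl(x-\sum_{k=1}^n a_k\be^{-k}\bigr)$, then Lemma~\ref{lem:1} gives $y\in I_\be$, and appending $(a_{n+1},\dots,a_{n+r})$ keeps the word in $\E_{n+r}(x;\be)$ exactly when $(a_{n+1},\dots,a_{n+r})\in\E_r(y;\be)$. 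Thus every length-$n$ prefix has at most $m_r(\be)$ continuations, so $\#\E_{n+r}(x;\be)\le m_r(\be)\cdot\#\E_n(x;\be)$ for all $x$ and all $n$. Proposition~\ref{prop:upb} then gives $\mathfrak M_\be\le\sqrt[r]{m_r(\be)}$, and by (\ref{eq:mge}),
\[
H_\be\ \ge\ \log_\be 2-\tfrac1r\log_\be m_r(\be).
\]
It therefore suffices, for each exceptional $\be$, to exhibit an $r\le 14$ making the right-hand side exceed $0.81$.

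The finite list itself is explicit: the $12$ Pisot numbers below $1.6$ are known, and the $427$ Pisot transition points are already available as a byproduct of the numerical sweep, being precisely those Pisot roots of the transition equations $(a_1,\dots,a_n)_{L/U}=(a_1',\dots,a_n')_{L/U}$ for $n\le14$. For each such $\be$ I would compute $m_r(\be)$ \emph{exactly} inside $\BQ(\be)$: each critical point $(a_1,\dots,a_r)_L=\sum_{j=1}^r a_j\be^{-j}$ and $(a_1,\dots,a_r)_U=(a_1,\dots,a_r)_L+\frac{\be^{-r}}{\be-1}$ is represented as an element of that field; sorting the $2^{r+1}$ critical points and counting, on each resulting subinterval, the words satisfying the membership condition of Lemma~\ref{lem:1} yields $m_r(\be)$ with certainty.

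The crux --- and the reason a separate treatment is needed at all --- is the exact comparison of critical points, which is exactly what degenerates at a transition point. By definition such a $\be$ is one where two critical points coincide, so the numerical ordering used for generic $\be$ becomes ambiguous there. Working in $\BQ(\be)$ removes the ambiguity: for critical points $u,v$ the difference $u-v$ is a known field element, so I can first test $u=v$ algebraically and, when $u\ne v$, determine the sign of the fixed nonzero real algebraic number $u-v$ from a high-precision evaluation at $\be$, which is rigorous because such an element is separated from $0$ by an a priori computable amount (the $50$ digits used comfortably suffice). This lets me sort the critical points correctly, ties included, and hence evaluate $m_r(\be)$ precisely at the very values the numerical sweep had to exclude; substituting into the displayed bound finishes each case.

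I expect the principal obstacle to be computational rather than conceptual. The number of critical points grows like $2^{r}$, so at $r=14$ each $\be$ calls for sorting on the order of $2^{15}$ exact elements of $\BQ(\be)$, and this has to be repeated over all $439$ exceptional numbers; the real work is to organize the symbolic arithmetic and sign tests so that every comparison is certifiable while the total cost stays feasible.
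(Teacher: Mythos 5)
Your overall reduction is the same as the paper's: for each of the finitely many exceptional $\beta$ (the 12 Pisot numbers below $1.6$ and the 427 Pisot transition points), work exactly in $\mathbb{Q}(\beta)$, compute a maximal prefix count for some length $r$, and feed it through Proposition~\ref{prop:upb} and (\ref{eq:mge}). But there is a genuine gap at the one place where transition points actually differ from generic $\beta$: the value of $\#\E_r(x;\beta)$ \emph{at} a coincident critical point $x$. Your submultiplicativity step needs $\#\E_r(y;\beta)\le m_r(\beta)$ for \emph{every} $y\in I_\beta$, including $y$ equal to a critical point; yet your algorithm computes the maximum of $\#\E_r(\,\cdot\,;\beta)$ only over the open subintervals between consecutive critical points. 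For a non-transition $\beta$ these two maxima agree (a point $y$ with, say, $y=w_U$ for a single word $w$ and for no other word has the same count as the adjacent open interval), but at a transition point they can differ: if $(a_1,\dots,a_r)_U=(a'_1,\dots,a'_r)_L=x$, then both words are admissible at $x$ together with everything admissible on either side, so the count at $x$ jumps above both neighbouring interval counts --- e.g.\ for $\beta=\tau$ one has $\#\E_2(\tau-1;\tau)=3$ while the open-interval maximum is $2$. So either your $m_r$ fails to bound $\#\E_r$ at such points (a gap in the induction), or you must take the closed maximum, in which case your bound degrades (for $\tau$ you would get $\mathfrak M_\tau\le\sqrt3$ instead of $\sqrt2$), and it is then not established that the degraded bounds still clear $0.81$.

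The paper closes exactly this hole with an argument your proposal lacks: when two critical points coincide at $x$, the offending prefixes have \emph{forced} continuations --- any expansion of $x$ beginning with $(a_1,\dots,a_r)$ where $x=(a_1,\dots,a_r)_U$ must continue $(1,1,1,\dots)$, and one beginning with $(a'_1,\dots,a'_r)$ where $x=(a'_1,\dots,a'_r)_L$ must continue $(0,0,0,\dots)$ --- so these boundary words contribute only an additive constant to $\#\E_n(x;\beta)$ and cannot affect the exponential growth rate. This is what legitimizes discarding the critical points and using the open-interval maximum (giving $m_2(\tau)=2$, hence $\mathfrak M_\tau\le\sqrt2$). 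Two secondary discrepancies: you cap the verification length at $r\le 14$, whereas the paper's Table~\ref{tab:0.81} uses lengths up to $17$ (e.g.\ length $17$ for the smallest Pisot number $1.3247\dots$); the ``$n\le 14$'' in the statement only specifies which transition points are being treated, not the length available for verifying them. Also, the paper's implementation merges words representing the same element of $\mathbb{Q}(\beta)$ (with weights), which is a practical necessity at these lengths but not a logical one.
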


Combined, this theorem and Theorem~\ref{thm:0.81 i} yield

\begin{thm}\label{thm:main}
For any Pisot $\be$ we have $H_\be>0.81$.
\end{thm}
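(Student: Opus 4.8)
The plan is to combine the two preceding theorems, which between them partition the Pisot numbers in $(1,2)$ into three classes: those with $\beta>1.6$ that are not transition points for $n\le14$, those with $\beta<1.6$, and those that are transition points for some $n\le14$. Theorem~\ref{thm:0.81 i} handles the first class, and Theorem~\ref{thm:0.81 ii} handles the remaining two. So the proof of Theorem~\ref{thm:main} is essentially a verification that these three classes genuinely exhaust all Pisot $\beta\in(1,2)$, together with an invocation of the two cited results. Since every Pisot number lies in $(1,2)$ for the purposes of this paper, and $1.6$ splits this interval, the only subtlety is the bookkeeping of which $\beta$ fall into which case.

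First I would observe that any Pisot $\beta\in(1,2)$ either satisfies $\beta<1.6$ or $\beta\ge1.6$. If $\beta<1.6$, then by Theorem~\ref{thm:0.81 ii} we immediately have $H_\beta>0.81$. If instead $\beta\ge1.6$ (we may take $\beta>1.6$, as $\beta=1.6$ is not an algebraic integer and hence not Pisot), then $\beta$ is either a transition point for some $n\le14$ or it is not. In the former subcase Theorem~\ref{thm:0.81 ii} applies, since that theorem covers \emph{all} Pisot transition points for $n\le14$ regardless of whether they exceed $1.6$; in the latter subcase Theorem~\ref{thm:0.81 i} applies directly. In every case we conclude $H_\beta>0.81$.

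The one point requiring care is the overlap and exhaustiveness of the stated hypotheses. Theorem~\ref{thm:0.81 i} excludes transition points for $n\le14$, and Theorem~\ref{thm:0.81 ii} is precisely designed to recover exactly those excluded values (the $427$ Pisot transition points) together with the $12$ Pisot numbers below $1.6$. I would make explicit that the set of transition points for $n\le14$ is finite, so that no Pisot $\beta>1.6$ escapes both theorems, and that the finiteness of the Pisot numbers below $1.6$ (a consequence of Salem's theorem that $S$ is closed, combined with the fact that the smallest Pisot number is the root of $x^3=x+1$) guarantees Theorem~\ref{thm:0.81 ii} addresses a genuinely finite checklist. With exhaustiveness established, the conclusion $H_\beta>0.81$ for every Pisot $\beta$ follows.

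The main obstacle here is not mathematical but organizational: the real content has already been discharged in Theorems~\ref{thm:0.81 i} and~\ref{thm:0.81 ii} through the computational machinery of Proposition~\ref{prop:upb} and the bound~\eqref{eq:mge}. The only thing that could go wrong is a gap in the case analysis—for instance, a Pisot $\beta>1.6$ that is a transition point for some $n\le14$ but was not among the $427$ checked, or a miscount at the boundary $\beta=1.6$. I would therefore frame the proof as a short logical assembly, emphasizing that the disjunction ``below $1.6$ / transition point / neither'' is complete and that each disjunct is covered, so that Theorem~\ref{thm:main} is an immediate corollary of the two component theorems.
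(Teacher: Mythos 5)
Your proposal is correct and matches the paper's own proof, which simply states that Theorems~\ref{thm:0.81 i} and~\ref{thm:0.81 ii} ``combined'' yield Theorem~\ref{thm:main}; your case analysis (below $1.6$ / transition point / neither, with the observation that $\beta=1.6$ itself is not an algebraic integer) is exactly the implicit bookkeeping the paper relies on.
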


\begin{table}[H]
\begin{center}
\begin{tabular}{|l|l|l|l|}
\hline
Minimal polynomial of $\beta$ & Pisot number & Length & Lower Bound for $H_\be$ \\
\hline
$x^3-x-1 $&$ 1.3247 $&$ 17 $&$ .88219$ \\ \hline
$x^4-x^3-1 $&$ 1.3803 $&$ 16 $&$ .87618$ \\ \hline
$x^5-x^4-x^3+x^2-1 $&$ 1.4433 $&$ 15 $&$ .89257$ \\ \hline
$x^3-x^2-1 $&$ 1.4656 $&$ 15 $&$ .88755$ \\ \hline
$x^6-x^5-x^4+x^2-1$&$ 1.5016 $&$ 14 $&$ .90307$ \\ \hline
$x^5-x^3-x^2-x-1 $&$ 1.5342 $&$ 15 $&$ .89315$ \\ \hline
$x^7-x^6-x^5+x^2-1 $&$ 1.5452 $&$ 13 $&$ .90132$ \\ \hline
$x^6-2 x^5+x^4-x^2+x-1 $&$ 1.5618 $&$ 15 $&$ .90719$ \\ \hline
$x^5-x^4-x^2-1 $&$ 1.5701 $&$ 15 $&$ .88883$ \\ \hline
$x^8-x^7-x^6+x^2-1 $&$ 1.5737 $&$ 14 $&$ .90326$ \\ \hline
$x^7-x^5-x^4-x^3-x^2-x-1 $&$ 1.5900 $&$ 15 $&$ .89908$ \\ \hline
$x^9-x^8-x^7+x^2-1 $&$ 1.5912 $&$ 14 $&$ .90023$ \\ \hline
\end{tabular}
\end{center}
\caption{Lower bounds for Garsia's entropy for all Pisot numbers $< 1.6$}
\label{tab:0.81}
\end{table}

As a corollary, we obtain a result on small Pisot numbers:

\begin{prop}
\label{prop:<1.7}
All Pisot $\be<1.7$ have Garsia entropy $H_\be>0.87$.
\end{prop}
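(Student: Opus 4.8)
The plan is to split the range $\be<1.7$ into the two regimes $\be\in(1,1.6)$ and $\be\in[1.6,1.7)$, each of which already lies within reach of the machinery developed above, and then to observe that in both regimes the lower bound for $H_\be$ comfortably clears the threshold $0.87$ rather than merely the $0.81$ of Theorem~\ref{thm:main}. So the proposition should follow without any genuinely new computation, only a sharper reading of the numbers already produced.

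For the range $\be<1.6$ the key point is that there are only finitely many Pisot numbers below $1.6$—exactly twelve of them—and all twelve are enumerated in Table~\ref{tab:0.81}. Here I would simply inspect the last column of that table: every entry is a lower bound for $H_\be$ obtained by the symbolic method of Section~\ref{sec:Symbolic} at the indicated length, and the smallest of them is the value $\approx0.876$ attained at the root of $x^4-x^3-1$. Since $0.876>0.87$, all twelve Pisot numbers below $1.6$ satisfy $H_\be>0.87$, and this part of the proposition is immediate.

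For $\be\in[1.6,1.7)$ I would invoke the continuous-interval procedure of Section~4. Recall that this procedure locates all transition points for $n\le14$, partitions $(1.6,2.0)$ into subintervals on which $m_n(\be)$ is constant, and on each such subinterval converts the value of $m_n(\be)$ into an upper bound for $\M_\be$ via Proposition~\ref{prop:upb}, hence into a lower bound for $H_\be$ via \eqref{eq:mge}. Restricting attention to the subintervals lying in $(1.6,1.7)$, the sharpened bound recorded in Remark~\ref{rmk:0.81} gives $H_\be>0.87$ for every non-transition Pisot $\be$ in this range. The remaining finitely many Pisot numbers in $(1.6,1.7)$ that happen themselves to be transition points would then be handled one at a time by the symbolic calculation of Section~\ref{sec:Symbolic}, verifying directly that each of them too exceeds $0.87$.

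The main obstacle is purely quantitative. Whereas Theorem~\ref{thm:0.81 i} only needs the computed lower bound to clear the generous threshold $0.81$, here it must clear the much tighter threshold $0.87$ \emph{uniformly} across all of $(1.6,1.7)$, including at the transition points. Since the lower-bound curve dips lowest near $\tau_3\approx1.8392$, which lies safely outside our interval, there is headroom; but one must be certain that no subinterval inside $(1.6,1.7)$, and no transition point there, produces a bound at or below $0.87$. Confirming this requires the length-$14$ refinement to be genuinely fine enough, which is precisely what the computation underlying Figure~\ref{fig:1.6,1.7} is designed to establish.
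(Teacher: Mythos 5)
Your proposal matches the paper's own reasoning: the proposition is stated there as a corollary of exactly the two ingredients you cite, namely Table~\ref{tab:0.81} (whose smallest entry, $0.87618$ for the root of $x^4-x^3-1$, exceeds $0.87$) for the twelve Pisot numbers below $1.6$, and the length-$14$ computation summarized in Remark~\ref{rmk:0.81} and Figure~\ref{fig:1.6,1.7} for $(1.6,1.7)$, with Pisot transition points in that range checked symbolically as in Section~\ref{sec:Symbolic}. Your added caution that the transition points must clear $0.87$ (not just the $0.81$ of Theorem~\ref{thm:0.81 ii}) is a fair reading of what the underlying computations must, and do, establish.
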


There are actually a lot of advantages to doing a symbolic check as compared to the numerical techniques of the previous section.
Some of these include not requiring high precision arithmetic
    and the combining of equivalent strings, both of
    which has speed and memory advantages.
These are described in the example below.

To illustrate the (computer-assisted) proof of Theorem~\ref{thm:0.81 ii}, consider as an example $\beta = \tau$ the golden ratio. As before, we wish to find the
    \[ \frac{a_1}{\tau} + \frac{a_2}{\tau^2} \leq x \leq
        \frac{a_1}{\tau}+ \frac{a_2}{\tau^2 }+ \frac{1/\tau^3}{1-1/\tau}. \]
But now we can find exact symbolic values for these ranges.
In particular, we notice that $\frac{1/\tau^3}{1-1/\tau} = \tau - 1$. Secondly, as $\frac{1}{\tau} = \tau-1$ and $\frac{1}{\tau^2} = 2-\tau$ we get

\begin{table}[H]
\begin{center}
\begin{tabular}{|l|l|l|}
\hline
$(a_1, a_2)$ & Lower Bound         & Upper Bound \\
\hline
$(0,0)$  &  $0                   $ & $\tau-1 \approx 0.618 $\\
\hline
$(0,1)$  &  $2-\tau \approx 0.382$ & $1 $\\
\hline
$(1,0)$  &  $\tau-1\approx 0.618 $ & $2\tau-2 \approx 1.236$ \\
\hline
$(1,1)$  &  $1                   $ & $\tau \approx 1.618 $\\
\hline
\end{tabular}
\end{center}
\caption{Upper and lower bounds for initial strings of length 2 for $x = a_1 \tau^{-1} + a_2 \tau^{-2} + \dots$}
\end{table}

So in particular, it {\em is} possible for $x$ to start with both $(0,0)$ and $(1,0)$. But if this is the case then $x = (0,0,1,1,1,\dots)  = (1,0,0,0,\dots) = \tau -1$. So it is not possible for $x$ to have an infinite number of expansions starting with $(0,0)$ and an infinite number of expansions starting with $(1,0)$.
Similar arguments can be used for the other critical point, $x=1$.

So we can discard the critical points and subdivide the possible values of $x$ into the following ranges:

\begin{table}[H]
\begin{center}
\begin{tabular}{|l|l|}
\hline
Range & Possible initial string of the $\tau$-expansion \\
\hline
$x \in (0, 2-\tau)$ & $(0,0)$ \\
\hline
$x \in (2-\tau, \tau-1) $ & $(0,0), (0,1)$ \\
\hline
$x \in (\tau-1, 1) $ & $(0,1), (1,0)$ \\
\hline
$x \in (1, 2 \tau - 2) $ & $(1,0), (1,1)$ \\
\hline
$x \in (2 \tau - 2, \tau) $ & $(1,1)$ \\
\hline
\end{tabular}
\end{center}
\caption{Initial string of $\tau$-expansion of $x$, depending on $x$.}
\end{table}

This immediately shows that $m_2(\tau) = 2$.
Hence, by induction, $\#\E_{n+2}(x;\tau)\le 2 \#\E_n(x;\tau)$, whence $\mathfrak M_{\tau}\le\sqrt2$. By (\ref{eq:mge}), $H_{\tau}>\frac12\log_{\tau} 2 = 0.7202100$.

The main advantage of this method comes when we have longer strings. In particular, it is easy to see that $(1,0,0) = (0,1,1)$.
This allows us to compress information.

\begin{table}[H]
\begin{center}
\begin{tabular}{|l|l|l|}
\hline
$a_1 a_2 a_3$    & Lower Bound         & Upper Bound \\
\hline
$(0,0,0)$          & 0                     & $5-3 \tau \approx 0.1459$  \\
\hline
$(0,0,1)$          & $2\tau-3\approx 0.2361$& $2- \tau \approx 0.3820$ \\
\hline
$(0,1,0)       $   & $2-\tau\approx 0.3820  $& $4-2\tau \approx 0.7639 $\\
\hline
$(0,1,1) = (1,0,0) $   & $\tau-1\approx 0.6180  $& $1$ \\
\hline
$(1,0,1)       $   & $3\tau-4\approx 0.8541 $& $2\tau-2 \approx 1.2361 $\\
\hline
$(1,1,0)       $   & $1$                     & $3 - \tau \approx 1.3820$\\
\hline
$(1,1,1)       $   & $2\tau-2\approx 1.2361$& $\tau \approx 1.6180$\\
\hline
\end{tabular}
\end{center}
\caption{Upper and lower bounds for initial string of length $3$ for $x =
    a_1 \tau^{-1} + a_2 \tau^{-2} + a_3 \tau^{-3} + \dots $}
\end{table}

This gives that for $x \in (\tau-1, 4-2\tau)$ we have the initial string of $(0,1,0)$, $(0,1,1)$, $(1,0,0)$, and if $x \in (3\tau-4, 1)$ we have the initial string of $(1,0,1), (0,1,1), (1,0,0)$.

Our implementation does not maintain a separate entry for $(0,1,1)$ and $(1,0,0)$, as they are
    equivalent.
Instead, the algorithm stores only one of these two strings, and indicates that this
    has weight 2.
For the general Pisot $\beta$, this is checked by noticing that $(a_1, a_2, \dots, a_n)$ is equivalent to the same word as $(b_1, b_2, \dots, b_n)$ if and only if $a_n x^{n-1} + \dots + a_1 \equiv b_n x^{n-1} + b_{n-1} x^{n-2} + \dots + b_1 \equiv c_{d-1} x^{d-1} + \dots + c_d \pmod{p(x)}$ for some $c_i$, with $p(x)$ the minimal polynomial for $\beta$, of degree $d$.  Given the large amount of overlapping that we see for large lengths, this will have major cost savings, both in memory and time.

\section{The maximal growth exponent for the multinacci family and discussion}
\label{sec:multi}

In this section we will compute the maximal growth exponent for the multinacci family and compare our lower bound (\ref{eq:mge}) with the actual values.

Let, as above, $\tau_m$ denote the largest root of $x^m-x^{m-1}-\dots-x-1$ (hence $\tau=\tau_2$). Define the {\em local dimension} of the Bernoulli convolution $\mu_\be$ as follows:
\[
d_\be(x)=\lim_{h\to0}\frac{\log\mu_\be(x-h,x+h)}{\log h}
\]
(if the limit exists). As was shown in Lalley \cite{Lalley98}, $d_\be(x)\equiv H_\be$ for $\mu_\be$-a.e. $x\in I_\be$ for any Pisot $\be$.

Notice that it is well known that the limit in question exists if it does so along the subsequence $h=c\be^{-n}$ for any fixed $c>0$ (see, e.g., Feng \cite{Feng05}). We choose $c=(\be-1)^{-1}$, so
\begin{equation}\label{eq:dbetan}
d_\be(x)=-\lim_{n\to\infty}\frac1n\log_\be\mu_\be\left(x-
\frac{\be^{-n}}{\be-1},x+
\frac{\be^{-n}}{\be-1}\right).
\end{equation}
Let $\be=\tau_m$ for some $m\ge2$.

\begin{lemma}
Suppose $\be$ is multinacci, and put
\[
\e_\be(x)=\lim_{n\to\infty}\sqrt[n]{\#\E_n(x;\be)}.
\]
This limit exists if and only if $d_\be(x)$ exists, and, in this case,
\begin{equation*}
d_\be(x)=\log_\be\frac2{\e_\be(x)}.
\end{equation*}
\end{lemma}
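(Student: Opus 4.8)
The plan is to compare the measure $\mu_\be$ of the symmetric window $J_n:=\bigl(x-\tfrac{\be^{-n}}{\be-1},\,x+\tfrac{\be^{-n}}{\be-1}\bigr)$ appearing in \eqref{eq:dbetan} with the counting quantity $\#\E_n(x;\be)$, and to show that the two differ only by a subexponential factor; the formula and the ``if and only if'' then drop out on taking $n$th roots. Write $w_L=(a_1,\dots,a_n)_L=\sum_{1}^n a_j\be^{-j}$ and decompose $\mu_\be=\rho_n*\tilde\mu_n$, where $\rho_n=\sum_{w\in\{0,1\}^n}2^{-n}\delta_{w_L}$ is the pushforward of the uniform measure under the length-$n$ partial sum, and $\tilde\mu_n$ is the law of $\sum_{k>n}a_k\be^{-k}$, a rescaled copy of $\mu_\be$ supported on $[0,\tfrac{\be^{-n}}{\be-1}]$. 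Since $\be<2$, $\mu_\be$ (hence $\tilde\mu_n$) is non-atomic, so the distinction between open and closed intervals may be ignored throughout.

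For the lower bound I would invoke Lemma~\ref{lem:1}: if $w\in\E_n(x;\be)$ then $0\le x-w_L\le\tfrac{\be^{-n}}{\be-1}$, and a short computation shows that the translate $J_n-w_L$ then contains the whole support $[0,\tfrac{\be^{-n}}{\be-1}]$ of $\tilde\mu_n$, so $\tilde\mu_n(J_n-w_L)=1$. Summing the convolution identity $\mu_\be(J_n)=\sum_{w}2^{-n}\tilde\mu_n(J_n-w_L)$ over $w\in\E_n(x;\be)$ alone gives
\[
\mu_\be(J_n)\ \ge\ 2^{-n}\,\#\E_n(x;\be).
\]
For the upper bound, $\tilde\mu_n(J_n-w_L)>0$ forces $w_L$ into the enlarged window $\bigl(x-\tfrac{2\be^{-n}}{\be-1},\,x+\tfrac{\be^{-n}}{\be-1}\bigr)$, of length $\tfrac{3\be^{-n}}{\be-1}$; hence $\mu_\be(J_n)\le 2^{-n}N_n(x)$, where $N_n(x)$ counts the length-$n$ words whose left endpoint lies in this window. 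Covering the window by at most four $\E$-windows $[y_i-\tfrac{\be^{-n}}{\be-1},y_i]$ with each $y_i$ within $\tfrac{2\be^{-n}}{\be-1}$ of $x$, I get $N_n(x)\le 4\max_i\#\E_n(y_i;\be)$.

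The result then reduces to the estimate $\#\E_n(y;\be)\le\be^{o(n)}\,\#\E_n(x;\be)$ for every $y$ with $|y-x|\le C\be^{-n}$, which I expect to be the main obstacle and the only place where the multinacci hypothesis is used. The point is that for $\be=\tau_m$ the expansion of $1$ is finite ($1=\sum_{k=1}^m\be^{-k}$), so the $\be$-shift is sofic and $\#\E_n(\cdot;\be)$ is computed, up to bounded factors, by a product of finitely many transfer matrices read off the expansion itinerary of the point. Two points within $C\be^{-n}$ share this itinerary for all but the last $O(1)$ steps, so their matrix products agree except for $O(1)$ terminal factors and the growth rates coincide up to a bounded, hence subexponential, factor. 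I would also record the submultiplicative bound $\#\E_{n+k}(x;\be)\le m_k(\be)\,\#\E_n(x;\be)$, proved by grouping extensions according to their length-$n$ prefix exactly as in Proposition~\ref{prop:upb}, as the elementary half of this comparison.

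Granting the nearby-point estimate, the two displayed inequalities yield
\[
1\ \le\ \frac{\mu_\be(J_n)}{2^{-n}\,\#\E_n(x;\be)}\ \le\ \be^{o(n)},
\]
so that $-\tfrac1n\log_\be\mu_\be(J_n)=\log_\be 2-\tfrac1n\log_\be\#\E_n(x;\be)+o(1)$. Reading off \eqref{eq:dbetan} and the definition of $\e_\be(x)$, the limit defining $d_\be(x)$ exists precisely when the limit defining $\e_\be(x)$ does, and in that case $d_\be(x)=\log_\be 2-\log_\be\e_\be(x)=\log_\be\frac{2}{\e_\be(x)}$, as claimed.
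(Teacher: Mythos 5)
Your two basic inequalities are correct and are in fact identical to the paper's: your lower bound $\mu_\be(J_n)\ge 2^{-n}\#\E_n(x;\be)$ is the paper's \eqref{eq:mubeta}, and your $N_n(x)$ is (up to endpoints) the paper's $\#\widetilde\E_n(x;\be)$, so your upper bound is \eqref{eq:mubeta2}; your covering of the enlarged window by four $\E$-windows is also a legitimate substitute for the paper's use of Garsia's separation lemma to control the number of terms. The gap lies in the step you yourself flag as the main obstacle: the estimate $\#\E_n(y;\be)\le\be^{o(n)}\#\E_n(x;\be)$ for $|y-x|\le C\be^{-n}$. The statement is true, but your justification of it is wrong. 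You argue that $\#\E_n(\cdot;\be)$ is given by a transfer-matrix product read off the point's itinerary and that two points within $C\be^{-n}$ ``share this itinerary for all but the last $O(1)$ steps.'' That last assertion is false: two arbitrarily close points can have itineraries that disagree from the very first symbol. For instance, take $\be=\tau$ and $z=\tau-1\in D_1(\tau)$, which has the two expansions $(0,1,1,1,\dots)$ and $(1,0,0,0,\dots)$; points $x<z<y$ lie on opposite sides of $z$, and since $z\in D_n(\tau)$ for every $n$, the points $x$ and $y$ sit in different cells of the level-$n$ partition for \emph{every} $n$, no matter how small $|x-y|$ is. Whether ``itinerary'' means greedy digits or the net-interval symbols of Feng's formalism, the two symbol strings are entirely different, so the two matrix products share no common factors and the ``agree up to $O(1)$ terminal factors'' argument collapses.

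Comparability of the counts at nearby points holds \emph{despite}, not because of, itinerary agreement, and it is precisely the non-trivial content of Feng's Lemma~2.11 \cite[Lemma~2.11]{Feng05}, which is what the paper invokes at exactly this juncture: for multinacci $\be$ one has $\sqrt[n]{p_n(x)}\sim\sqrt[n]{p_n(x')}$ whenever $|x-x'|\le C\be^{-n}$ with $x,x'\in D_n(\be)$ not endpoints of $I_\be$. Combined with Garsia's separation lemma \cite[Lemma~1.51]{Garsia62} (or with your covering argument) and the identity expressing $\#\E_n(x;\be)$ as a sum of boundedly many values $p_n(y)$ over $y\in D_n(\be)$ near $x$, this gives the estimate you need, and the rest of your argument then goes through verbatim. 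So the fix is concrete: delete the sofic/itinerary sketch and replace it by an appeal to (or an honest proof of) Feng's lemma, which is a genuine theorem about comparing matrix products along \emph{distinct} adjacent itineraries and cannot be obtained by the shortcut you propose.
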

\begin{proof}
Let $x = \sum_{k=1}^\infty a_k \beta^{-k}$ and consider $(a_1, \dots, a_n)$,
    the first $n$ terms of this sequence.
We see that
\begin{eqnarray*}
(a_1, \dots, a_n)_L
     & = & \sum_{k=1}^n a_k \beta^{-k} \\
     & \geq & \sum_{k=1}^n a_k \beta^{-k} + \sum_{k=n+1}^\infty
         (a_k-1)\beta^{-k}  \\
     & = &  \sum_{k=1}^\infty a_k \beta^{-k} - \sum_{k=n+1}^\infty \beta^{-k} \\
     & = & x - \frac{\be^n}{\beta-1}
\end{eqnarray*}
and
\begin{eqnarray*}
(a_1, \dots, a_n)_U
   & = &  \sum_{k=1}^n a_k \beta^{-k} + \sum_{k=n+1}^\infty \beta^{-k} \\
   & \leq &  \sum_{k=1}^\infty a_k \beta^{-k} + \sum_{k=n+1}^\infty
             \beta^{-k} \\
   & = &  x + \frac{\be^n}{\beta-1}
\end{eqnarray*}
Further, this true, regardless of which representation $(a_1, a_2, \dots)$
    of $x$ that we take.
Hence, if $(a_1, \dots, a_n) \in \E_n(x, \beta)$, then for {\em all}
    $a_{n+1}', a_{n+2}', \dots \in \{0, 1\}$ we have
\begin{eqnarray*}
\sum_{k=1}^{n} a_k \be^{-k} + \sum_{k=n+1}^{\infty} a_{k}' \be^{-k}
      & \in&  ((a_1, a_2, \dots, a_n)_L, (a_1, a_2, \dots, a_n)_U)  \\
      & \subseteq &
      \left(x - \frac{\be^{-n}}{\beta-1}, x+\frac{\be^{-n}}{\beta-1}
              \right)
\end{eqnarray*}
This in turn implies that
\begin{equation}\label{eq:mubeta}
\mu_\be \left(x-\frac{\be^{-n}}{\be-1},x+\frac{\be^{-n}}{\be-1}\right) \ge 2^{-n}\# \E_n(x;\be).
\end{equation}

Now put
\[
\widetilde \E_n(x;\be)=\left\{(\at_1,\dots, \at_n)\in\{0,1\}^n \mid -\frac{\be^{-n}}{\be-1}\le
x-\sum_{k=1}^n \at_k\be^{-k} \le\frac{2\be^{-n}}{\be-1} \right\}.
\]
Our next goal is to prove the inequality
\begin{equation}\label{eq:mubeta2}
\mu_\be \left(x-\frac{\be^{-n}}{\be-1},x+\frac{\be^{-n}}{\be-1}\right) \le 2^{-n}\# \widetilde\E_n(x;\be).
\end{equation}

Let $y \in (x - \frac{\be^{-n}}{\beta-1}, x+ \frac{\be^{-n}}{\beta-1})$
    have an expansion $y = \sum_{k=1}^\infty \at_k \be^{-k}$.
It suffices to show that $(\at_1, \dots, \at_n) \in \widetilde\E_n(x;\be)$.

By noticing that $-\frac{\be^{-n}}{\beta-1} \leq x - y  \leq
    \frac{\be^{-n}}{\beta-1}$ we get first that
\[
-\frac{\be^{-n}}{\beta-1}  \leq  x - y
  \leq  x - \sum_{k=1}^\infty \at_k \be^{-k}
  \leq  x - \sum_{k=1}^n \at_k \be^{-k}
\]
and further than
\[
\begin{array}{crcl}
& x - y & \leq & \frac{\be^{-n}}{\beta-1} \\
\implies & x - \sum_{k=1}^\infty \at_k \be^{-k}
          & \leq & \frac{\be^{-n}}{\beta-1} \\
\implies & x - \sum_{k=1}^n \at_k \be^{-k}
   - \sum_{k=n+1}^\infty \at_k \be^{-k} & \leq & \frac{\be^{-n}}{\beta-1} \\
\implies & x - \sum_{k=1}^n \at_k \be^{-k}
   & \leq & \sum_{k=n+1}^\infty \at_k \be^{-k} + \frac{\be^{-n}}{\beta-1} \\
\implies & x - \sum_{k=1}^n \at_k \be^{-k}
   & \leq & \sum_{k=n+1}^\infty \be^{-k} + \frac{\be^{-n}}{\beta-1} \\
\implies & x - \sum_{k=1}^n \at_k \be^{-k}
   & \leq & 2 \frac{\be^{-n}}{\beta-1}
\end{array}
\]
Hence $(\at_1, \dots, \at_n) \in \widetilde E_n(x; \be)$ as required.

Combining (\ref{eq:mubeta}) and (\ref{eq:mubeta2}), we obtain
\[
2^{-n}\# \E_n(x;\be)\le \mu_\be \left(x-\frac{\be^{-n}}{\be-1},x+\frac{\be^{-n}}{\be-1}\right) \le 2^{-n}\# \widetilde\E_n(x;\be),
\]
whence
\begin{equation}\label{eq:squeeze}
\begin{aligned}
\log_\be 2-\frac1n\log_\be\# \widetilde\E_n(x;\be) &\le -\frac1n\log_\be \mu_\be\left(x-\frac{\be^{-n}}{\be-1},x+\frac{\be^{-n}}{\be-1} \right)\\
&\le \log_\be 2-\frac1n\log_\be\# \E_n(x;\be).
\end{aligned}
\end{equation}

Notice that (\ref{eq:squeeze}) in fact holds for any $\be$. Now we use the fact that $\be$ is multinacci. It follows from Feng, \cite[Lemma~2.11]{Feng05} that for a multinacci $\be$ one has $\sqrt[n]{p_n(x)}\sim \sqrt[n]{p_n(x')}$ provided $|x-x'|\le C\be^{-n}$ for any fixed $C>0$ and any $x,x'\in D_n(\be)$ which are not endpoints of $I_\be$. (Here $p_n(x)$ is given by (\ref{eq:pnx}).)

Observe that
\begin{align*}
\#\E_n(x;\be) &= \sum_{\substack{y\in D_n(\be):\\ 0\le y-x\le \frac{\be^{-n}}{\be-1}}} p_n(y),
 \\
\#\widetilde\E_n(x;\be) &= \sum_{\substack{y\in D_n(\be):\\ -\frac{\be^{-n}}{\be-1}\le y-x\le \frac{2\be^{-n}}{\be-1}}} p_n(y).
\end{align*}
In view of the Garsia separation lemma (see \cite[Lemma~1.51]{Garsia62}), each sum runs along a finite set whose cardinality is bounded by some constant (depending on $\be$) for all $n$.

Hence $\sqrt[n]{\#\E_n(x;\be)}\sim \sqrt[n]{\#\widetilde\E_n(x;\be)}$ for all $x\in(0,\frac1{\be-1})$, and (\ref{eq:squeeze}) together with (\ref{eq:dbetan}) yield the claim of the lemma.

\end{proof}

Consequently, for a multinacci $\be$,
\begin{equation}\label{eq:infdbeta}
\inf_{x\in I_\be^*} d_\be(x)=\log_\be\frac2{\mathfrak M_\be},
\end{equation}
where $I_\be^*=\bigl\{x\in(0,\frac1{\be-1}):d_\be(x)\,\text{exists} \bigr\}$.
In \cite[Theorem~1.5]{Feng05} Feng showed that
\[
\inf_{x\in I_{\tau_m}^*} d_{\tau_m}(x)=
\begin{cases}\log_\tau 2-\frac12,&m=2\\
\frac{m}{m+1}\log_{\tau_m}2,&m\ge3.
\end{cases}
\]
This immediately gives us the explicit formulae for the maximal growth exponent for the multinacci family, namely,
\[
\mathfrak M_{\tau_m}=\begin{cases}\sqrt\tau,&m=2\\
2^{\frac1{m+1}}, &m\ge3.
\end{cases}
\]
In fact, one can easily obtain the values $x$ at which $\mathfrak M_\be$ is attained. More precisely, for $\be=\tau$ the maximum growth is attained at $x$ with the $\be$-expansion $(1000)^\infty$, i.e., at $x=(5+\sqrt5)/10$.\footnote{This was essentially proved by Pushkarev \cite{Pushkarev95}, via multizigzag lattices techniques.}

For $m\ge3$ the maximal growth point is $x$ with the $\be$-expansion $(10^m)^\infty$. These claims can be easily verified via the matrix representation for $p_n(x)$ given in \cite{Feng05}, and we leave it as an exercise for the interested reader. (Recall that the growth exponent for $p_n(x)$ is the same as for $\#\E_n(x;\be)$ for the multinacci case.)

Finally, since we know the exact values of the maximal growth exponent for this family, we can assess how far our estimate (that is, the smallest value of the local dimension) is from the actual value of $H_\be$ (which is the average value of $d_\be(x)$ for $\mu_\be$-a.e. $x$). Here is the comparison table:

\begin{table}[H]
\begin{center}
\begin{tabular}{|l|l|l|}
\hline
$m$    & $\log_{\tau_m} \frac2{\mathfrak M_{\tau_m}}$   & $H_{\tau_m}$ \\
\hline
          2& $0.9404$& $0.9957$\\
          \hline
3    & $0.8531$& $0.9804$ \\
\hline
          4& $0.8450$& $0.9869$\\
          \hline
5 & $0.8545$ & $0.9926$\\
\hline
\end{tabular}
\end{center}
\caption{Lower bounds and the actual values for $H_{\tau_m}$}
\end{table}

We see that for $m\ge3$ our bounds are far below $H_\be$; moreover, our method cannot in principle produce a uniform lower bound for all $\be$ better than $0.845$. However, as a first approximation it still looks pretty good.

\begin{rmk} We believe (\ref{eq:infdbeta}) holds for all Pisot $\be\in(1,2)$. If this were the case, then (\ref{eq:mge}) would effectively yield a lower bound for the infimum of the local dimension of $\mu_\be$. This may prove useful, as, similarly to the entropy, no lower bound for $d_\be$ is known for the non-multinacci $\be$. Furthermore, if one could compute the exact value of $\mathfrak M_\be$, this would yield the exact value of $\inf_{x\in I_\be^*} d_\be(x)$.
\end{rmk}



\section{Acknowledgements and additional comments}

The authors would like to thank the two referees for many useful suggestions.
In addition, we would like to communicate a question asked to us from one of the
    referees, that the authors feel would make an interesting question for
    possible future research.
\begin{quote}
    In Section 6 - besides the multinacci, could you say something on
    $\beta = (a+\sqrt{a^2+4})$, with an integer $a \ge 2$?
    (Maybe using results from Komatsu \cite{Komatsu02}.)
    Or, more generally, on numbers $\beta$ that are root of a
    polynomial $X^n - a_{n-1}
      X^{n-1} - \dots - a_1 X - a_0$, where $a_{n-1} \ge \dots \ge a_1 \ge 1$?
\end{quote}

We would also like to mention the recent paper by Feng and the second author \cite{FengSidorov}, in which the average growth exponent for $\beta$-expansions is studied for the Pisot parameters~$\beta$.


\def\lfhook#1{\setbox0=\hbox{#1}{\ooalign{\hidewidth
  \lower1.5ex\hbox{'}\hidewidth\crcr\unhbox0}}}

\end{document}